\newtheorem{theorem}{Theorem}
\newtheorem{corollary}[theorem]{Corollary}
\theoremstyle{definition}
\newtheorem{definition}[theorem]{Definition}
\newtheorem{example}[theorem]{Example}
\theoremstyle{remark}
\numberwithin{equation}{section}
\begin{document}

\newcommand{\Diff}{\operatorname{Diff}}
\newcommand{\Homeo}{\operatorname{Homeo}}
\newcommand{\Hom}{\operatorname{Hom}}
\newcommand{\Exp}{\operatorname{Exp}}
\newcommand{\Orb}{\operatorname{\textup{Orb}}}
\newcommand{\Emb}{\operatorname{Emb}}

\newcommand{\COrb}{\operatorname{\star\textup{Orb}}}
\newcommand{\CROrb}{\operatorname{\scriptscriptstyle{\blacklozenge}\scriptstyle\textup{Orb}}}
\newcommand{\ssslozenge}{\scriptscriptstyle{\blacklozenge}}
\newcommand{\ssstriangledown}{\scriptscriptstyle{\blacktriangledown}}

\newcommand{\Stwo}{\mbox{$\displaystyle S^2$}}
\newcommand{\Sn}{\mbox{$\displaystyle S^n$}}
\newcommand{\supp}{\operatorname{supp}}
\newcommand{\intr}{\operatorname{int}}
\newcommand{\kernel}{\operatorname{ker}} 

\newcommand{\RR}{\mathbb{R}} 
\newcommand{\ZZ}{\mathbb{Z}} 

\newcommand{\orbify}[1]{\ensuremath{\mathcal{#1}}}
\newcommand{\starfunc}[1]{\ensuremath{{}_\star{#1}}}
\newcommand{\lozengefunc}[1]{\ensuremath{{}_{\scriptscriptstyle{\blacklozenge}}{#1}}}
\newcommand{\redfunc}[1]{\ensuremath{{}_\bullet{#1}}}

\newcommand{\OrbDiff}{\ensuremath{\Diff_{\Orb}}}
\newcommand{\RedOrbDiff}{\ensuremath{\Diff_{\textup{red}}}}
\newcommand{\COrbDiff}{\ensuremath{\Diff_{\COrb}}}
\newcommand{\CROrbDiff}{\ensuremath{\Diff_{\CROrb}}}
\newcommand{\OrbMaps}{\ensuremath{C_{\Orb}}}
\newcommand{\RedOrbMaps}{\ensuremath{C_{\textup{red}}}}
\newcommand{\COrbMaps}{\ensuremath{C_{\COrb}}}
\newcommand{\CROrbMaps}{\ensuremath{C_{\CROrb}}}
\newcommand{\Frechet}{Fr\'{e}chet\ }
\newcommand{\Frechetnospace}{Fr\'{e}chet}
\newcommand*{\longhookrightarrow}{\ensuremath{\lhook\joinrel\relbar\joinrel\rightarrow}}

\title[On the notions of Suborbifold and Orbifold Embedding]{On the notions of Suborbifold and Orbifold Embedding}

\author{Joseph E. Borzellino}

\address{Department of Mathematics, California Polytechnic State
  University, 1 Grand Avenue, San Luis Obispo, California 93407}
\email{jborzell@calpoly.edu}

\author{Victor Brunsden} \address{Department of Mathematics and
  Statistics, Penn State Altoona, 3000 Ivyside Park, Altoona,
  Pennsylvania 16601} \email{vwb2@psu.edu}

\subjclass[2010]{Primary 57R18; Secondary 57R35, 57R40}

\date{\today} \commby{Editor} \keywords{orbifolds, embeddings}
\begin{abstract}
  The purpose of this article is to investigate the relationship between suborbifolds and orbifold embeddings. In particular, we give natural definitions of the notion of suborbifold and orbifold embedding and provide many examples. Surprisingly, we show that there are (topologically embedded) smooth suborbifolds which do not arise as the image of a smooth orbifold embedding. We are also able to characterize those suborbifolds which can arise as the images of orbifold embeddings. As an application, we show that a length-minimizing curve (a geodesic segment) in a Riemannian orbifold can always be realized as the image of an orbifold embedding.
\end{abstract}

\maketitle

\section{Introduction}\label{IntroSection}

The purpose of this article is to investigate some of the difficulties and subtleties associated with the study of the differential topology of smooth orbifolds. It will be no surprise to anyone who has taken more than a cursory look at orbifolds, that the goal of extending the most basic notions from the differential topology of manifolds to orbifolds has not been achieved in a universally accepted manner in the nearly 60 years since Satake \cites{MR0079769,MR0095520} introduced \emph{V-manifolds} (now, \emph{orbifolds} as popularized by Thurston \cite{Thurston78}). In the six decades since they were introduced, there has been a proliferation of definitions and ad-hoc refinements each used to overcome some inherent difficulty unearthed while attempting an orbifold generalization of a manifold result. These challenges are readily acknowledged by experts and often provide the inspiration for new research on orbifolds. In fact, it has been humorously mentioned that there exists today a partial ordering for the plethora of definitions related to orbifolds, and that one can only imagine what an application of Zorn's lemma might yield! The aim here is much less ambitious. Our goal is to expose and investigate in detail the subtle notion of suborbifold and its relation to the natural idea of an orbifold embedding. Some of the particular difficulties involving the notion of suborbifolds and orbifold embeddings have already been noted in the orbifold literature \cites{MR1926425,MR2104605,MR2359514,MR2523149}, and more recently in \cites{MR2778793,MR2962023,MR2973378,MR3126596}. For manifolds, it is a fundamental result of differential topology that submanifolds are precisely the images of embeddings \cite{MR0448362}*{Theorem~3.1}. In fact, many authors use this characterization as the definition of submanifold. Our main result identifies necessary and sufficient conditions which characterize precisely when a suborbifold can be realized as the image of an orbifold embedding. Unlike the case for manifolds, we also show that suborbifolds exist which are not the images of orbifold embeddings.  

\begin{theorem}\label{MainTheorem} Let $\orbify{P}$ be a smooth suborbifold of a smooth orbifold $\orbify{O}$.
\begin{enumerate}
\item\label{MainTheoremPart1} Then, there exists an orbifold $\orbify{P}'$ and a topological embedding of underlying spaces $\iota:X_{\orbify{P}'}\to X_{\orbify{O}}$ so that $\iota(X_{\orbify{P}'})=X_{\orbify{P}}$ if and only if $\orbify{P}$ is saturated.
\item\label{MainTheoremPart2} Moreover, there exists a complete orbifold embedding $\starfunc\iota=(\iota,\{\tilde\iota_x\},\{\Theta_{\iota,x}\}):\orbify{P}'\to\orbify{O}$ covering $\iota$ if and only if $\orbify{P}$ is both saturated and split.
\end{enumerate}
\end{theorem}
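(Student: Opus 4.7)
The plan is to work chart-by-chart, analyzing the local structure of the suborbifold $\orbify{P}$ inside ambient orbifold charts of $\orbify{O}$. Fix $x\in X_{\orbify{P}}$ and an ambient chart $(\tilde U_x,G_x,\pi_x)$ of $\orbify{O}$ at $x$; the suborbifold structure picks out a distinguished submanifold component $\tilde V_x\subset\tilde U_x$ with $\pi_x(\tilde V_x)\subset X_{\orbify{P}}$, and the full preimage $\pi_x^{-1}(U_x\cap X_{\orbify{P}})$ decomposes as a union of $G_x$-translates of such components. The saturated hypothesis amounts to this preimage being $G_x$-invariant (a union of whole $G_x$-orbits), while the split hypothesis amounts to the setwise stabilizer of $\tilde V_x$ in $G_x$ admitting a canonical splitting off its pointwise stabilizer. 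These two conditions will be responsible, respectively, for the two parts of the theorem.

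For Part~\ref{MainTheoremPart1}, the forward direction is essentially formal: if $\iota:X_{\orbify{P}'}\to X_{\orbify{O}}$ with $\iota(X_{\orbify{P}'})=X_{\orbify{P}}$ arises from an orbifold $\orbify{P}'$, then pulling $X_{\orbify{P}}$ back to any ambient chart $\tilde U_x$ automatically yields a union of $G_x$-orbits, forcing saturation. Conversely, assuming $\orbify{P}$ is saturated, I would construct $\orbify{P}'$ by endowing $X_{\orbify{P}}$ itself with an orbifold atlas: at each $x$, use the quotient chart $\tilde V_x/H_x$, where $H_x$ is the setwise stabilizer of $\tilde V_x$ in $G_x$, and check that saturation forces these local quotients to glue consistently on overlaps. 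The map $\iota$ is then just the inclusion of underlying spaces, which is a topological embedding by construction.

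For Part~\ref{MainTheoremPart2}, the forward direction extracts the splitting from the embedding data $\{\tilde\iota_x\},\{\Theta_{\iota,x}\}$: the homomorphisms $\Theta_{\iota,x}$ identify the local group of $\orbify{P}'$ at $x$ with a subgroup of $G_x$, and equivariance of the lift $\tilde\iota_x$ forces this subgroup to be a splitting of the short exact sequence relating $H_x$ to the pointwise stabilizer of $\tilde V_x$. For the reverse direction, given both saturation and splitting, I would apply Part~\ref{MainTheoremPart1} to produce $\orbify{P}'$ and $\iota$, use the splitting at each $x$ to select a subgroup $K_x\leq G_x$ isomorphic to the local group of $\orbify{P}'$ at $x$, define $\tilde\iota_x$ as the inclusion of the chart of $\orbify{P}'$ onto $\tilde V_x$, and take $\Theta_{\iota,x}$ to be the inclusion $K_x\hookrightarrow G_x$; the compatibility axioms for a complete orbifold embedding should then follow from the naturality of these choices.

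The main obstacle I expect is in the reverse direction of Part~\ref{MainTheoremPart2}: a splitting at a single chart selects a subgroup $K_x$, but independent choices of splittings at nearby charts need not be compatible under chart transitions, so one must verify that the ``split'' hypothesis (in its intended global formulation) is strong enough to guarantee that the collection $\{\tilde\iota_x\},\{\Theta_{\iota,x}\}$ can be chosen coherently, producing a genuine global cocycle rather than a patchwork of local data. A secondary subtlety in Part~\ref{MainTheoremPart1} is confirming that the orbifold $\orbify{P}'$ so assembled really is a smooth orbifold in the intended sense; its local models may a priori differ from $\tilde V_x/H_x$ by identifications arising from distinct components of $\pi_x^{-1}(U_x\cap X_{\orbify{P}})$, and one must verify that saturation makes these identifications orbifold-compatible on overlaps.
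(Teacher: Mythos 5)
Your chart-by-chart skeleton is the same as the paper's, and your treatment of Part~\ref{MainTheoremPart2} (compose a group-theoretic splitting with the inclusion $\Lambda_{x,\orbify{O}}\hookrightarrow\Gamma_{x,\orbify{O}}$ to get $\Theta_{\iota,x}$, take $\tilde\iota_x$ to be the inclusion $\tilde V_x\hookrightarrow\tilde U_x$, and observe that equivariance holds because the $\Omega$-part acts trivially on $\tilde V_x$) is essentially the paper's argument. But there is a genuine error in your reformulation of the saturation condition, and it destroys the forward direction of Part~\ref{MainTheoremPart1}. You assert that saturation ``amounts to the preimage $\pi_x^{-1}(U_x\cap X_{\orbify{P}})$ being $G_x$-invariant.'' The preimage of \emph{any} subset of the quotient under the quotient map is automatically $G_x$-invariant, so this condition is vacuous and cannot characterize anything; consequently your claim that pulling $X_{\orbify{P}}$ back to a chart ``automatically yields a union of $G_x$-orbits, forcing saturation'' proves nothing. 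Saturation (Definition~\ref{SaturatedSuborbifoldDef}) is the statement that each ambient orbit $\Gamma_{x,\orbify{O}}\cdot\tilde y$ meets the distinguished component $\tilde V_x$ in exactly the single $\Lambda_{x,\orbify{O}}$-orbit of $\tilde y$; equivalently, the natural continuous open surjection $\tilde V_x/\Lambda_{x,\orbify{O}}\to U_x\cap X_{\orbify{P}}$ is injective. That injectivity is the content of Part~\ref{MainTheoremPart1}, and it is exactly what the paper proves ($\phi'_x$ is a homeomorphism onto its image if and only if $\orbify{P}$ is saturated). Example~\ref{SplitOnlySubOrbifolds} is the counterexample to your version: for $\orbify{O}_2=\mathbb{C}/\mathbb{Z}_2\subset\orbify{O}_3=\mathbb{C}/\mathbb{Z}_4$ one has $\tilde V_0=\tilde U_0=\mathbb{C}$, so the pullback of $X_{\orbify{P}}$ is all of $\tilde U_0$ and is trivially invariant, yet the suborbifold is not saturated because $\tilde V_0/\mathbb{Z}_2\to\mathbb{C}/\mathbb{Z}_4$ is two-to-one away from the origin.

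A secondary issue: in your construction of $\orbify{P}'$ you quotient $\tilde V_x$ by $H_x$, the \emph{setwise} stabilizer of $\tilde V_x$ in $G_x$. The suborbifold structure already specifies a subgroup $\Lambda_{x,\orbify{O}}$, which need not equal the setwise stabilizer, and the definition of suborbifold requires the chart group of $\orbify{P}$ to be $\Lambda_{x,\orbify{O}}/\Omega_{x,\orbify{O}}$; replacing it by $H_x$ can change the isotropy group of $\orbify{P}'$ and hence produce a different orbifold from the one whose existence the theorem asserts. The paper avoids both problems by working throughout with the given $\Lambda_{x,\orbify{O}}$ and by locating the saturation hypothesis precisely where it is needed, namely as the injectivity of $\phi'_x:\tilde V_x/\Lambda_{x,\orbify{O}}\to U_x$. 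Your worry about coherence of the splittings across charts in Part~\ref{MainTheoremPart2} is reasonable but not an obstacle here, since the notion of complete orbifold map in Definition~\ref{CompleteOrbiMap} only requires the lift and homomorphism data pointwise up to germ equivalence.
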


The definitions of what it means for a suborbifold to be \emph{saturated} or \emph{split} appear in section~\ref{SuborbifoldSection}. The definition of complete orbifold map appears in 
section~\ref{CompleteOrbiMapSubSection}.

As an application of Theorem~\ref{MainTheorem} to length minimizing geodesics in Riemannian orbifolds, we have the following corollary which follows from the characterization of length minimizing geodesic segments found in \cites{MR2687544,MR1218706}.

\begin{corollary}\label{GeodesicsAreEmbedded} Let $\orbify{O}$ be a Riemannian orbifold and let $X\subset X_{\orbify{O}}$ be the underlying point set of a length-minimizing curve joining two points 
of $\orbify{O}$. Then, there is a suborbifold $\orbify{P}\subset\orbify{O}$ whose underlying space $X_{\orbify{P}}=X$ is the image of a complete orbifold embedding.
\end{corollary}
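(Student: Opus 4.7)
The plan is to invoke Theorem~\ref{MainTheorem}\eqref{MainTheoremPart2}: it suffices to exhibit a smooth suborbifold structure $\orbify{P}\subset\orbify{O}$ with $X_{\orbify{P}}=X$ that is both saturated and split. The essential input is the structure theorem for length-minimizing geodesics from \cites{MR2687544,MR1218706}, which asserts that in each uniformizing chart $(\tilde U,\Gamma,\phi)$ of $\orbify{O}$, the minimizing curve $\gamma$ admits a local lift to $\tilde U$ which is a smooth Riemannian geodesic, and whose tangent vector at the preimage of any singular point of $\orbify{O}$ is fixed by the isotropy representation at that preimage.

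First, I would construct $\orbify{P}$ chart by chart. In each chart $(\tilde U,\Gamma,\phi)$, pick such a lift $\tilde\gamma\subset\tilde U$. By the structure theorem, the tangent vector of $\tilde\gamma$ at any point fixed by a subgroup $H\le\Gamma$ is itself $H$-invariant, so uniqueness of geodesics with prescribed initial data forces $\tilde\gamma$ to be pointwise fixed by $H$ on a whole neighborhood. Consequently $\phi^{-1}(X\cap\phi(\tilde U))=\Gamma\cdot\tilde\gamma$ is a $\Gamma$-invariant, smoothly embedded $1$-submanifold of $\tilde U$. These pieces glue, via the orbifold atlas of $\orbify{O}$, into the desired smooth suborbifold $\orbify{P}$ with underlying space $X$.

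It remains to verify the two hypotheses of Theorem~\ref{MainTheorem}\eqref{MainTheoremPart2}. Saturation is tautological: the preimage of $X$ in each chart is by construction a full $\Gamma$-orbit. For the split condition, the key fact is that the stabilizer of any point on $\tilde\gamma$ acts trivially on $\tilde\gamma$, and so preserves the decomposition of the ambient tangent space into the tangent line of $\tilde\gamma$---on which the stabilizer acts trivially---and its orthogonal complement, where the transverse isotropy data lives. This yields exactly the factorization of isotropy needed for $\orbify{P}$ to be split.

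The main obstacle is the bookkeeping near points where the isotropy of $\orbify{O}$ is larger than at generic points of $\gamma$: one must confirm that the pointwise-fixing property genuinely propagates to a full neighborhood of such a point on $\tilde\gamma$ (rather than only to the point itself), and that the lifts chosen in overlapping charts match up to the $\Gamma$-action, so that the local structures truly assemble into a global suborbifold. Once these routine compatibility details are settled, Theorem~\ref{MainTheorem}\eqref{MainTheoremPart2} immediately supplies the complete orbifold embedding realizing $X$.
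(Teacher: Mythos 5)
Your overall strategy (verify that $X$ carries a suborbifold structure which is saturated and split, then quote Theorem~\ref{MainTheorem}\eqref{MainTheoremPart2}) is the same as the paper's, but two of your key steps are not correct as stated. First, the structure theorem you invoke does not say that the tangent vector of the local geodesic lift is fixed by the isotropy representation at a singular preimage; it says that $c|_{(a,b)}$ lies in a single stratum of constant isotropy, while the \emph{endpoints} may sit in more singular strata. For a concrete counterexample to your version, take $\orbify{O}=\RR^2/\ZZ_3$ and a radial minimizing segment emanating from the cone point: no nonzero tangent vector at the origin is $\ZZ_3$-invariant, the lift is not pointwise fixed by the isotropy on any neighborhood, and the full preimage $\Gamma\cdot\tilde\gamma$ is a union of three arcs through the origin --- not an embedded $1$-submanifold. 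So your chart-by-chart construction of $\tilde V_x$ as the whole preimage fails exactly at the points where the argument matters, and with it the claim that ``saturation is tautological.'' The correct chart is a \emph{single} arc $\tilde V_x=\tilde c_x$ with $\Lambda_{x,\orbify{O}}$ acting trivially on it, and saturation then genuinely requires an argument: if some $\delta\in\Gamma_{x,\orbify{O}}$ carried $\tilde c_x(s)$ to $\tilde c_x(s')$ with $s\ne s'$, then $c(s)=c(s')$ downstairs, so the minimizer would contain a loop, contradicting that it minimizes length between any two of its points. This no-self-intersection argument is the essential idea your proposal is missing.

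Second, your argument for the split condition via an orthogonal decomposition of the ambient tangent space is both off-target and unnecessary. The point the paper uses is simply that the constant-isotropy characterization (together with the no-loop observation) forces the \emph{intrinsic} isotropy $\Gamma_{x,\orbify{P}}=\Lambda_{x,\orbify{O}}/\Omega_{x,\orbify{O}}$ to be trivial at every point of the curve, i.e.\ $\orbify{P}$ is a trivial $1$-orbifold; the exact sequence $1\to\Omega_{x,\orbify{O}}\to\Lambda_{x,\orbify{O}}\to\Gamma_{x,\orbify{P}}\to 1$ then splits for the trivial reason that its quotient is trivial. Once you replace your two flawed steps with these observations, the appeal to Theorem~\ref{MainTheorem}\eqref{MainTheoremPart2} goes through as you intend.
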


\section{Orbifold Background}
Although there are many references for this background material, we will use our previous work \cites{MR2523149,MR3117354} as our standard reference. While much of what we discuss here works equally well for smooth $C^r$ orbifolds, to simplify the exposition, we restrict ourselves to smooth $C^\infty$ orbifolds. Throughout, the term \emph{smooth} means $C^\infty$. This results in no loss of generality \citelist{\cite{MR2523149}*{Proposition~3.11} \cite{MR2879379}}. Note that the classical definition of orbifold given below is modeled on the definition in Thurston \cite{Thurston78} and that these orbifolds are referred to as {\em classical effective orbifolds} in \cite{MR2359514}.

\begin{definition}\label{orbifold}
  An $n$-dimensional  \emph{smooth orbifold} $\orbify{O}$,
  consists of a paracompact, Hausdorff topological space
  $X_\orbify{O}$ called the \emph{underlying space}, with the
  following local structure.  For each $x \in X_\orbify{O}$ and
  neighborhood $U$ of $x$, there is a neighborhood $U_x \subset U$, an
  open set $\tilde U_x$ diffeomorphic to $\RR^n$, a finite group $\Gamma_x$ acting
  smoothly and effectively on $\tilde U_x$ which fixes $0\in\tilde
  U_x$, and a homeomorphism $\phi_x:\tilde U_x/\Gamma_x \to U_x$ with
  $\phi_x(0)=x$.  These actions are subject to the condition that for
  a neighborhood $U_z\subset U_x$ with corresponding $\tilde U_z \cong
  \RR^n$, group $\Gamma_z$ and homeomorphism $\phi_z:\tilde
  U_z/\Gamma_z \to U_z$, there is a smooth embedding $\tilde\psi_{zx}:\tilde
  U_z \to \tilde U_x$ and an injective homomorphism
  $\theta_{zx}:\Gamma_z \to \Gamma_x$ so that $\tilde\psi_{zx}$ is
  equivariant with respect to $\theta_{zx}$ (that is, for
  $\gamma\in\Gamma_z, \tilde\psi_{zx}(\gamma\cdot \tilde
  y)=\theta_{zx}(\gamma)\cdot\tilde\psi_{zx}(\tilde y)$ for all
  $\tilde y\in\tilde U_z$), such that the following diagram commutes:
  \begin{equation*}
    \xymatrix{{\tilde U_z}\ar[rr]^{\tilde\psi_{zx}}\ar[d]&&{\tilde U_x}\ar[d]\\
      {\tilde U_z/\Gamma_z}\ar[rr]^>>>>>>>>>>{\psi_{zx}=\tilde\psi_{zx}/\Gamma_z}\ar[dd]^{\phi_z}&&{\tilde U_x/\theta_{zx}(\Gamma_z)\ar[d]}\\
      &&{\tilde U_x/\Gamma_x}\ar[d]^{\phi_x}\\
      {U_z}\ar[rr]^{\subset}&&{U_x} }
  \end{equation*}
\end{definition}

We will refer to the neighborhood $U_x$ or $(\tilde U_x,\Gamma_x)$ or
$(\tilde U_x,\Gamma_x, \rho_x, \phi_x)$ as an \emph{orbifold chart}, and write $U_x=\tilde U_x/\Gamma_x$. In the 4-tuple  notation, we are making explicit the representation $\rho_x:\Gamma_x\to\text{Diff}^\infty(\tilde U_x)$.
The \emph{isotropy group of $x$} is the group $\Gamma_x$. The
definition of orbifold implies that the germ of the action of $\Gamma_x$ in a
neighborhood of the origin of $\RR^n$ is unique, so that by shrinking $\tilde U_x$ if necessary, $\Gamma_x$ is well-defined up to isomorphism. The
\emph{singular set} of \orbify{O} is the set of points
$x \in \orbify{O}$ with $\Gamma_x \ne \{ e\}$.
More detail can be found in \cite{MR2523149}.

\subsection{Smooth Suborbifolds}\label{SuborbifoldSection}
Originally, in \cite{Thurston78}, the notion of an $m$--suborbifold
$\orbify{P}$ of an $n$--orbifold $\orbify{O}$ required
$\orbify{P}$ to be locally modeled on $\RR^m\subset\RR^n$ modulo
finite groups. That is, the local action on $\RR^m$ is induced by the
local action on $\RR^n$. As interest in the differential topology of orbifolds grew, it was discovered early, for instance in \cite{MR1926425}, that this definition was too restrictive to admit, for example, the diagonal embedding of an orbifold as a suborbifold of the product orbifold. Other authors \cites{MR2104605,MR2359514,MR3126596} overcame this difficulty by defining their suborbifolds explicitly as images of their particular notion of orbifold embedding in analogy with the case of manifolds. In \cite{MR2973378}, we defined a notion of suborbifold which is general enough to include the diagonal embedding as a suborbifold of the product, but is independent of our notion of orbifold embedding which we recall in section~\ref{OrbifoldMapNotions}. Using our definition of suborbifold we can also easily identify those suborbifolds in the original sense of Thurston \cite{Thurston78}. We refer to them as \emph{full suborbifolds}. Recall the definition of suborbifold from \cite{MR2973378}:

\begin{definition}\label{SubOrbifold}
  An (embedded) \emph{suborbifold} \orbify{P} of an orbifold \orbify{O} consists
  of the following:
  \begin{enumerate}
  \item A subspace $X_{\orbify{P}}\subset X_{\orbify{O}}$ equipped
    with the subspace topology
  \item\label{FlexibleDef} For each $x\in X_{\orbify{P}}$ and neighborhood $W$ of $x$ in
    $X_{\orbify{O}}$ there is an orbifold chart $(\tilde U_x,
    \Gamma_x, \rho_x, \phi_x)$ about $x$ in \orbify{O} with
    $U_x\subset W$, a subgroup $\Lambda_x \subset \Gamma_x$ of the
    isotropy group of $x$ in \orbify{O} and a $\rho_x(\Lambda_x)$
    invariant submanifold $\tilde V_x\subset \tilde U_x \cong \RR^n$,
    so that $(\tilde V_x, \Lambda_x/\Omega_x,\rho_x\lvert_{\Lambda_x},\psi_x)$ is
    an orbifold chart for $\orbify{P}$ where $\Omega_x=\left\{\gamma\in\Lambda_x\mid \rho_x(\gamma)\lvert_{\tilde{V}_x}=\text{Id}\right\}$. (In particular, the \emph{intrinsic} isotropy subgroup at $x\in\orbify{P}$ is  $\Lambda_x/\Omega_x$), and
  \item
      $V_x = \psi_x(\tilde V_x/\rho_x(\Lambda_x))
      =U_x\cap X_{\orbify{P}}$
    is an orbifold chart for $x$ in $\orbify{P}$.
  \end{enumerate}
\end{definition}

Implicit in this definition is the requirement that the invariant submanifolds $\tilde{V}_x$ be smooth, and that the collection of charts $\{(\tilde V_x, \Lambda_x/\Omega_x,\rho_x\lvert_{\Lambda_x},\psi_x)\}$ satisfy the compatibility conditions of definition~\ref{orbifold}, thus giving $\orbify{P}$ the structure of a smooth orbifold. Condition (\ref{FlexibleDef}) of this definition is not very restrictive as we shall see later in this section. Thurston's notion of suborbifold \cite{Thurston78} is equivalent to adding the condition that $\Lambda_x = \Gamma_x$ at all $x$ in the underlying topological space of $\orbify{P}$, and so we make the following definition:

\begin{definition}\label{FullSuborbifoldDef} $\orbify{P}\subset\orbify{O}$ is a \emph{full suborbifold} of $\orbify{O}$ if $\orbify{P}$ is a suborbifold with
$\Lambda_x=\Gamma_x$ for all $x\in\orbify{P}$.
\end{definition}

When necessary for clarity, we will use the notation $\Gamma_{x,\orbify{O}}$ to denote the intrinsic isotropy of a point $x$ in an orbifold $\orbify{O}$, and use the subscript $\orbify{O}$ as well on needed subgroups of $\Gamma_{x,\orbify{O}}$. Observe that in the case of a suborbifold $\orbify{P}\subset\orbify{O}$ we always have the following exact sequence of groups

$$1\longrightarrow\Omega_{x,\orbify{O}}\longrightarrow\Lambda_{x,\orbify{O}}\subset\Gamma_{x,\orbify{O}}\longrightarrow\Gamma_{x,\orbify{P}}\longrightarrow 1$$
where $\Gamma_{x,\orbify{P}}$ denotes the intrinsic isotropy group of $\orbify{P}$ at $x$.

In characterizing those suborbifolds that are images of orbifold embeddings, we need the following two definitions.

\begin{definition}\label{SplitSuborbifoldDef} We say that $\orbify{P}\subset\orbify{O}$ is a \emph{split} suborbifold of $\orbify{O}$ if the exact sequence above is (right) split for all $x\in\orbify{P}$. That is, there is a group homomorphism $\sigma:\Gamma_{x,\orbify{P}}\to\Lambda_{x,\orbify{O}}$ such that the composition $q\circ\sigma=\text{Id}$, where $q:\Lambda_{x,\orbify{O}}\to\Gamma_{x,\orbify{P}}$ is the quotient homomorphism:

\begin{equation*}
\xymatrix{1\ar[r] & \Omega_{x,\orbify{O}}\ar[r] & \Lambda_{x,\orbify{O}}\ar[r]^-{q} & \Gamma_{x,\orbify{P}}\ar@/^.75pc/[l]^\sigma\ar[r] & 1.}
\end{equation*}
\end{definition}

Note that if $\orbify{P}\subset\orbify{O}$ is split, we have $\Lambda_{x,\orbify{O}}\cong\Omega_{x,\orbify{O}}\rtimes\Gamma_{x,\orbify{P}}$, a semidirect product, and in the case that the groups are abelian $\Lambda_{x,\orbify{O}}\cong\Omega_{x,\orbify{O}}\times\Gamma_{x,\orbify{P}}$, the direct product. Of course, if $\Omega_{x,\orbify{O}}$ or $\Gamma_{x,\orbify{P}}$ is trivial, then $\orbify{P}$ is split as well.

\begin{definition}\label{SaturatedSuborbifoldDef} We say that $\orbify{P}\subset\orbify{O}$ is a \emph{saturated} suborbifold of $\orbify{O}$ if for each $x\in\orbify{P}$ and $\tilde y\in\tilde V_x$, we have that
$\left(\Gamma_{x,\orbify{O}}\cdot\tilde y\right)\cap\tilde V_x=\Lambda_{x,\orbify{O}}\cdot\tilde y$.
\end{definition}

The saturation condition can be thought of as a kind of orbit maximality condition on the group $\Lambda_{x,\orbify{O}}\subset\Gamma_{x,\orbify{O}}$ relative to the invariant submanifold $\tilde V_x$. Observe that, by definition, every full suborbifold is automatically saturated.

\begin{example}\label{FullSuborbifoldExample} Let $\orbify{Q}=\RR/\ZZ_2$ be the smooth orbifold (without boundary)  where $\ZZ_2$ acts on $\RR$ via $\gamma\cdot x=-x$. The underlying topological space $X_{\orbify{Q}}$ of $\orbify{Q}$ is $[0, \infty)$ and the isotropy subgroups are $\{e\}$ for $x\in (0, \infty)$ and $\ZZ_2$ for $x = 0$. Let $\orbify{O}=\orbify{Q}\times\orbify{Q}$ be the smooth product orbifold (without boundary). See \cite{MR2523149}*{Definition~2.12}. The underlying space for $\orbify{O}$ can be identified with the closed first quadrant and the singular points of $\orbify{O}$ lie in one of three connected singular strata: the positive $x$ axis, the positive $y$ axis (corresponding to those points with $\ZZ_2$ isotropy), and the origin which has $\ZZ_2\times\ZZ_2$ isotropy. Then $\orbify{P}=\{0\}\times\orbify{Q}$ is a full (and thus, saturated) suborbifold of $\orbify{O}$. To see this, note that $\Gamma_{(0,0),\orbify{P}}\cong\mathbb{Z}_2$, 
$\Gamma_{(0,0),\orbify{O}}\cong\mathbb{Z}_2\times\mathbb{Z}_2$, and that $\Omega_{(0,0),\orbify{O}}=\{\gamma\in\Gamma_{(0,0),\orbify{O}}:\gamma\mid_{\{0\}\times\mathbb{R}}=\text{Id}\}\cong\mathbb{Z}_2$. Thus,
$\Gamma_{(0,0),\orbify{P}}\cong\Gamma_{(0,0),\orbify{O}}/\Omega_{(0,0),\orbify{O}}$. Similarly, 
$\orbify{P}=\orbify{Q}\times\{0\}$ is a full suborbifold. Each of these suborbifolds is split as well. See figure~\ref{FullSuborbifoldExampleIll}.
\end{example}

\begin{figure}[h]
   \labellist
   \small
   \pinlabel $\orbify{Q}$ [b] at 45 20
   \pinlabel $\orbify{O}$ [b] at 140 25
   \pinlabel $\orbify{P}$  at 98 45
   \pinlabel $\ZZ_2$  at 25 7
   \pinlabel $\ZZ_2\times\ZZ_2$  at 110 -5
   \endlabellist
   \includegraphics[scale=.6]{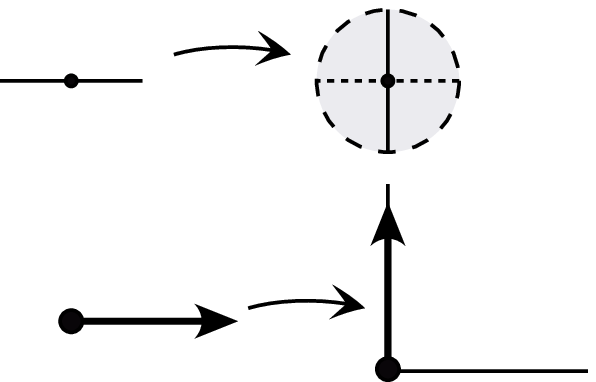}\hspace{1in}
   \labellist
   \small
   \pinlabel $\orbify{Q}$ [b] at 45 20
   \pinlabel $\orbify{O}$ [b] at 160 35
   \pinlabel $\orbify{P}$  at 120 43
   \pinlabel $\ZZ_2$  at 25 6
   \pinlabel $\ZZ_2$  at 137 -7
   \endlabellist
   \includegraphics[scale=.6]{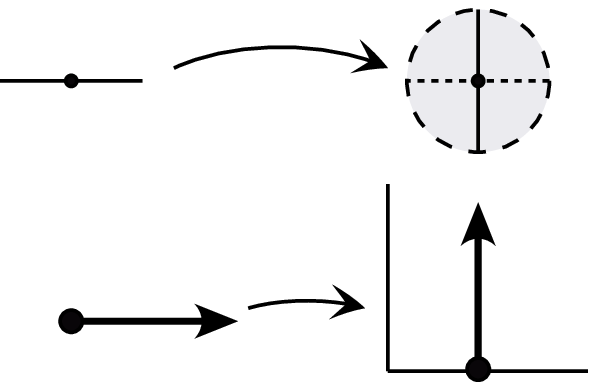}
   \caption{\label{FullSuborbifoldExampleIll}Examples~\ref{FullSuborbifoldExample} and \ref{FullSuborbifoldExample2}}
\end{figure}

\begin{example}\label{FullSuborbifoldExample2} Let $\orbify{Q}$ and $\orbify{O}$ be as in example~\ref{FullSuborbifoldExample}. Then $\orbify{P}=\{1\}\times\orbify{Q}$ is a full (thus, saturated), split suborbifold of $\orbify{O}$. In this case, note that $\Gamma_{(1,0),\orbify{P}}\cong\mathbb{Z}_2$, 
$\Gamma_{(1,0),\orbify{O}}\cong\mathbb{Z}_2$, and that $\Omega_{(1,0),\orbify{O}}=\{\gamma\in\Gamma_{(1,0),\orbify{O}}:\gamma\mid_{\{1\}\times\mathbb{R}}=\text{Id}\}=\{e\}$. Thus,
$\Gamma_{(1,0),\orbify{P}}\cong\Gamma_{(1,0),\orbify{O}}/\Omega_{(1,0),\orbify{O}}$.
See figure~\ref{FullSuborbifoldExampleIll}.
\end{example}

\begin{example}\label{DiagonalSuborbifoldExample} (See \cite{MR2523149}*{Example~2.15}) Let $\orbify{Q}$ and $\orbify{O}$ be as in example~\ref{FullSuborbifoldExample}. Then the diagonal $\orbify{P}=\textup{diag}(\orbify{Q})=\{(x,x)\mid x\in\orbify{Q}\}\subset\orbify{O}$ is a suborbifold. Here, $\Gamma_{(0,0),\orbify{P}}\cong\mathbb{Z}_2$, $\Gamma_{(0,0),\orbify{O}}\cong\mathbb{Z}_2\times\mathbb{Z}_2$, and $\Omega_{(0,0),\orbify{O}}=\{\gamma\in\Gamma_{(0,0),\orbify{O}}:\gamma\mid_{\textup{diag}(\mathbb{R})\subset\RR^2}=\text{Id}\}=\{e\}$. Note that
$\Gamma_{(0,0),\orbify{P}}\ncong\Gamma_{(0,0),\orbify{O}}/\Omega_{(0,0),\orbify{O}}$. Thus, $\orbify{P}$ is not a full suborbifold. However, $\orbify{P}$ is split and saturated since $\Gamma_{(0,0),\orbify{P}}\cong\Lambda_{(0,0),\orbify{O}}/\Omega_{(0,0),\orbify{O}}$, where $\mathbb{Z}_2\cong\Lambda_{(0,0),\orbify{O}}\subset\Gamma_{(0,0),\orbify{O}}$ is the diagonal embedding of
$\mathbb{Z}_2\hookrightarrow\mathbb{Z}_2\times\mathbb{Z}_2$ given by $\gamma\mapsto (\gamma,\gamma)$.
See figure~\ref{DiagonalSuborbifoldExampleIll}.
\end{example}

\begin{figure}[h]
   \labellist
   \small
   \pinlabel $\orbify{Q}$ [b] at 45 20
   \pinlabel $\orbify{O}$ [b] at 164 35
   \pinlabel $\orbify{P}$  at 124 31
   \pinlabel $\ZZ_2$  at 25 7
   \pinlabel $\ZZ_2\subset\ZZ_2\times\ZZ_2$ [l] at 100 -6
   \endlabellist
   \includegraphics[scale=.6]{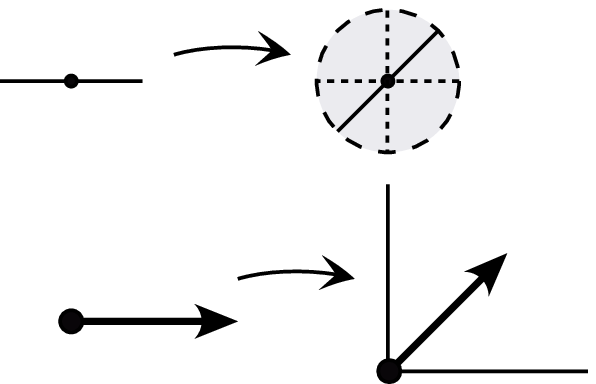}\hspace{1in}
   \labellist
   \small
   \pinlabel $\{e\}$ at 21 95
   \pinlabel $\orbify{S}$ at 43 145
   \pinlabel $\orbify{S}$ at 177 32
   \pinlabel $\orbify{O}$  at 120 45
   \pinlabel $\{e\}\subset\ZZ_2$ at 155 -6
   \endlabellist
   \includegraphics[scale=.6]{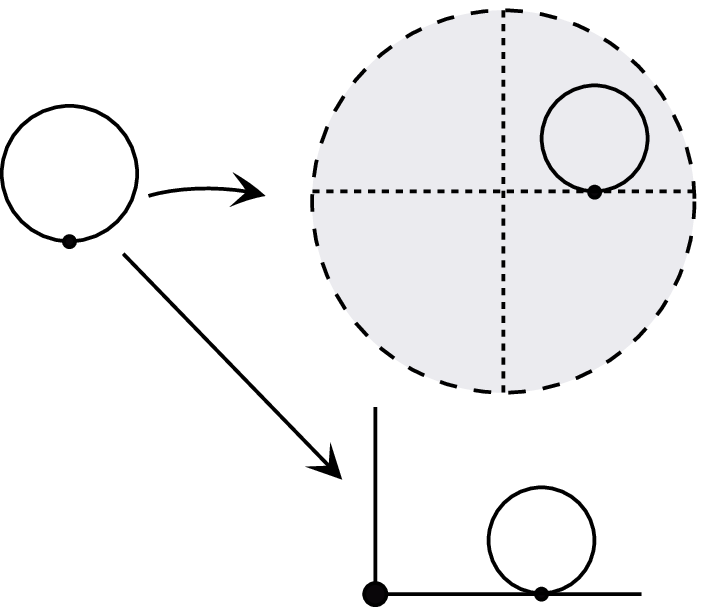}
   \caption{\label{DiagonalSuborbifoldExampleIll}Examples~\ref{DiagonalSuborbifoldExample} and \ref{SuborbifoldExample}}
\end{figure}

\begin{example}\label{SuborbifoldExample}
Let \orbify{O} be as in example~\ref{FullSuborbifoldExample}. Consider the circle $\orbify{S}\subset\orbify{O}$ of radius 1 centered at $(2,1)$. Then $\orbify{S}$ is a suborbifold of $\orbify{O}$ that is not a full suborbifold. To see this, just note that at the point $x=(2,0)\in\orbify{O}$ any lift of $\orbify{S}$ to $\tilde U_x\cong\RR^2$ in a neighborhood of $x$, cannot be an invariant submanifold unless we choose $\Lambda_{x,\orbify{O}}=\{e\}$. In this case, we see that the intrinsic isotropy group of $\orbify{S}$ at $x$ is trivial which it must be since $\orbify{S}$ is actually a compact 1-dimensional \emph{manifold}. That is, a compact 1-dimensional orbifold with trivial orbifold structure. It is easy to see that $\orbify{S}$ is saturated and split as well.
See figure~\ref{DiagonalSuborbifoldExampleIll}.
\end{example}

Each of the previous examples will be seen to be the image of an orbifold embedding in section~\ref{OrbEmbeddingsSection}. However, the following three examples of suborbifolds will be shown not to be the image of an orbifold embedding.

\begin{example}\label{NonEmbeddableSuborbifold} Let $\orbify{O}=\mathbb{C}^2/\mathbb{Z}_4$, where $\mathbb{Z}_4$ acts on $\mathbb{C}^2$ via the matrix group
$$\mathbb{Z}_4\cong\left\{\begin{pmatrix}i^k & 0\\0 & {(-1)}^k\end{pmatrix}\,: k\in\{0,1,2,3\}\right\}.$$
Then $\orbify{P}=(\{0\}\times\mathbb{C})/\mathbb{Z}_4\cong\mathbb{C}/\mathbb{Z}_2$ is a full, (hence, saturated) suborbifold of $\orbify{O}$. In this case, for $x=(0,0)$, we have $\Omega_{x,\orbify{O}}\cong\mathbb{Z}_2$, $\Lambda_{x,\orbify{O}}=\Gamma_{x,\orbify{O}}\cong\mathbb{Z}_4$, and $\Gamma_{x,\orbify{P}}\cong\mathbb{Z}_2$. Hence, the corresponding suborbifold exact sequence for $\orbify{P}$ is $1\rightarrow\mathbb{Z}_2\rightarrow\mathbb{Z}_4\rightarrow\mathbb{Z}_2\rightarrow 1$ which is clearly not split.
\end{example}

The next example illustrates how flexible the seemingly straightforward definition of suborbifold actually is.

\begin{example}\label{SplitOnlySubOrbifolds} Consider the 2-dimensional orbifolds $\orbify{O}_1=\mathbb{C}=\mathbb{C}/\mathbb{Z}_1,\orbify{O}_2=\mathbb{C}/\mathbb{Z}_2$, $\orbify{O}_3=\mathbb{C}/\mathbb{Z}_4$, and $\orbify{O}_4=\mathbb{C}/\mathbb{Z}_8$. Here $\mathbb{Z}_k$ acts on $\mathbb{C}$ via multiplication by $e^{2\pi i/k}$, $z\mapsto e^{2\pi i/k}z$. According to definition~\ref{SubOrbifold}, we have
$\orbify{O}_1\subset\orbify{O}_2\subset\orbify{O}_3\subset\orbify{O}_4$ as suborbifolds. The underlying topological spaces $X_{\orbify{O}_n}$ are all (topologically) homeomorphic, to a standard cone over a circle. It is easily checked that none of these is a saturated suborbifold of one of the others and since $\Omega_{0,\orbify{O}_n}=\{e\}$, $\orbify{O}_m\subset\orbify{O}_n$, $(m<n)$ are all split suborbifolds.
\end{example}

\begin{example}\label{NonFullNonSplitSaturatedSuborbifold} Let $\orbify{O}$ and $\orbify{P}$ be as in example~\ref{NonEmbeddableSuborbifold}. Let 
$\orbify{Q}=\text{diag}(\orbify{P})=\{(x,x)\in\orbify{O}\times\orbify{O}\mid x\in\orbify{P}\}$. Let $\orbify{R}=\orbify{O}\times\orbify{O}$. Let $x=(0,0)$. Then $\Gamma_{x,\orbify{R}}\cong\ZZ_4\times\ZZ_4$. Analogous to example~\ref{DiagonalSuborbifoldExample}, we see that $\Gamma_{x,\orbify{Q}}\cong\ZZ_2$, $\Lambda_{x,\orbify{R}}\cong\ZZ_4$, and $\Omega_{x,\orbify{R}}\cong\ZZ_2$. Thus, $\orbify{Q}$ is not split in $\orbify{R}$ and is not a full suborbifold of $\orbify{R}$. On the other hand, it is not hard to see that $\orbify{Q}$ is saturated in $\orbify{R}$.
\end{example}

Our last example shows that even though the underlying space of a smooth orbifold (without boundary) may be topologically embedded as a subspace of the underlying space of another smooth orbifold, this subspace (with its independent orbifold structure) may not be a suborbifold of the ambient orbifold.

\begin{example}\label{NonSuborbifold} Let $\orbify{O}$ be a so-called $\ZZ_p$-teardrop, and let $\orbify{Q}=\RR/\ZZ_2$ be as in example~\ref{FullSuborbifoldExample}, a smooth one-dimensional orbifold (without boundary). Let $X_\orbify{Q}\subset X_\orbify{O}$ be topologically embedded as a half-interval starting at the point $x$. See figure~\ref{TeardropFigure}. As a non-trivial 1-orbifold, the intrinsic isotropy group for $\orbify{Q}$ at $x$ must be $\ZZ_2$. Thus, the order of the ambient isotropy group, $\Gamma_{x,\orbify{O}}$, must be even. We conclude that $\orbify{Q}$ is not a suborbifold of $\orbify{O}$ when $p$ is odd. Of course, if $p$ is even, then it is possible for $\orbify{Q}$ to be a suborbifold of $\orbify{O}$ which is saturated and split and thus an embedded suborbifold.
\end{example}

\begin{figure}[h]
\labellist
   \small
   \pinlabel $\ZZ_p$ at 53 208
   \pinlabel $\orbify{Q}$  at 51 158
\endlabellist
   \includegraphics[scale=.5]{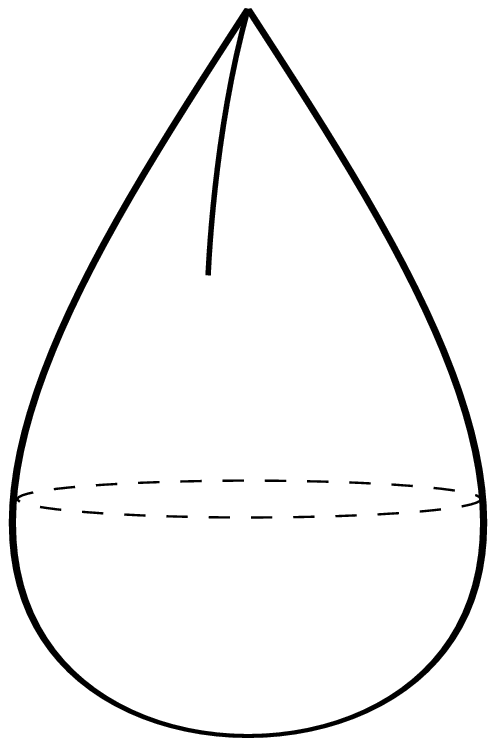}
   \caption{\label{TeardropFigure}Example~\ref{NonSuborbifold}}
\end{figure}

\section{Smooth Mappings Between Orbifolds}\label{OrbifoldMapNotions}

In the literature, there are four related definitions of maps between orbifolds which are based on the classical Satake-Thurston approach to orbifolds via atlases of orbifold charts. In this paper, we use the notion of complete orbifold map. It is distinguished from the other notions of orbifold map in that it keeps track of all defining data. All other notions of orbifold map descend from the complete orbifold maps by forgetting information. In the special case of embeddings, however, the property of being an embedding passes down from the complete orbifold maps to the level of orbifold maps. This observation requires only an understanding on how these two notions of orbifold map are related to one another. We point this out explicitly in our exposition below. We refer the reader to \cite{MR3117354} for the necessary background details and in what follows we use the notation of \cite{MR2523149}*{Section 2}.

The original motivation for defining the notion of complete orbifold map was to make meaningful and well-defined certain geometric constructions involving orbifolds and their maps. The need to be careful in defining an adequate notion of orbifold map was already noted in the work of Moerdijk and Pronk \cite{MR1466622} and Chen and Ruan \cite{MR1950941} and was missing from Satake's original work on $V$-manifolds \cites{MR0079769,MR0095520}. More recently, Pohl \cite{Pohl2013}, developed another notion of orbifold morphism to address some inconsistencies in earlier work using the groupoid approach to orbifolds.

\subsection{Mappings Between Orbifolds}\label{CompleteOrbiMapSubSection}
\begin{definition}\label{CompleteOrbiMap}  
  A $C^\infty$ \emph{complete orbifold map} $\starfunc f=(f,\{\tilde f_x\},\{\Theta_{f,x}\})$ between
  smooth orbifolds $\orbify{P}$ and $\orbify{O}$ consists of the
  following:
  \begin{enumerate}
  \item A continuous map $f:X_{\orbify{P}}\to X_{\orbify{O}}$ of
    the underlying topological spaces.
  \item For each $y\in \orbify{P}$, a group homomorphism
    $\Theta_{f,y}:\Gamma_y\to\Gamma_{f(y)}$.
  \item A smooth $\Theta_{f,y}$-equivariant lift $\tilde f_y:\tilde
    U_y\to\tilde V_{f(y)}$ where
    $(\tilde U_y,\Gamma_y)$ is an
    orbifold chart at $y$ and $(\tilde V_{f(y)},\Gamma_{f(y)})$ is an orbifold chart at $f(y)$.  That
    is, the following diagram commutes:
    \begin{equation*}
      \xymatrix{{\tilde U_y}\ar[rr]^{\tilde f_y}\ar[d]&&{\tilde V_{f(y)}}\ar[d]\\
        {\tilde U_y}/\Gamma_y\ar[rr]^>>>>>>>>>>%
        {{\tilde f_y}/\Theta_{f,y}(\Gamma_y)}\ar[dd]&&{\tilde
          V_{f(y)}}/\Theta_{f,y}(\Gamma_y)\ar[d]\\
        &&{\tilde V_{f(y)}}/\Gamma_{f(y)}\ar[d]\\
        U_y\ar[rr]^{f}&&V_{f(y)}
      }
    \end{equation*}
\begin{sloppypar}
  \item[($\star 4$)] (Equivalence) Two complete orbifold maps $\starfunc f=(f,\{\tilde f_x\},\{\Theta_{f,x}\})$ and
    $\starfunc g=(g,\{\tilde g_x\},\{\Theta_{g,x}\})$ are considered equivalent if for each
    $x\in\orbify{P}$, $\tilde f_x=\tilde g_x$ as germs and $\Theta_{f,x}=\Theta_{g,x}$. That is,
    there exists an orbifold chart $(\tilde U_x,\Gamma_x)$ at $x$ such
    that ${\tilde f_x}\vert_{\tilde U_x}={\tilde g_x}\vert_{\tilde
      U_x}$ and $\Theta_{f,x}=\Theta_{g,x}$. Note that this implies that $f=g$.
\end{sloppypar}
  \end{enumerate}
  The set of smooth complete orbifold maps from $\orbify{P}$ to $\orbify{O}$ will be denoted by
$C^\infty_{\COrb}(\orbify{P}, \orbify{O})$. For $\orbify{P}$ compact (without boundary), $C^\infty_{\COrb}(\orbify{P}, \orbify{O})$ carries the structure of a smooth \Frechet manifold \cite{MR3117354}.
\end{definition}

If we replace ($\star 4$) in definition~\ref{CompleteOrbiMap} by

\begin{enumerate}
  \begin{sloppypar}
  \item[(4)] (Equivalence) Two complete orbifold maps $(f,\{\tilde
    f_x\},\{\Theta_{f,x}\})$ and $(g,\{\tilde g_x\},\{\Theta_{g,x}\})$
    are considered equivalent if for each $x\in\orbify{P}$, $\tilde
    f_x=\tilde g_x$ as germs. That is, there exists an orbifold chart
    $(\tilde U_x,\Gamma_x)$ at $x$ such that ${\tilde
      f_x}\vert_{\tilde U_x}={\tilde g_x}\vert_{\tilde U_x}$ (which as
    before implies $f=g$),
  \end{sloppypar}
\end{enumerate}
where we have dropped the requirement that
$\Theta_{f,x}=\Theta_{g,x}$, we recover the notion of {\em orbifold
  map} $(f,\{\tilde f_x\})$ which appeared in \cite{MR2523149}*{Section
  3}. Thus, the set of orbifold maps $C^\infty_{\Orb}(\orbify{P},
\orbify{O})$ can be regarded as the equivalence classes of complete
orbifold maps under the less restrictive set-theoretic equivalence
($4$). For $\orbify{P}$ compact (without boundary), $C^\infty_{\Orb}(\orbify{P}, \orbify{O})$ carries the structure of a stratified space whose strata are modeled on smooth \Frechet manifolds \cite{MR3117354}.

\subsection{Orbifold Embeddings}\label{OrbEmbeddingsSection}

\begin{definition}\label{CompleteOrbiEmbedding} A complete orbifold map $\starfunc f=(f,\{\tilde f_x\},\{\Theta_{f,x}\})$ between smooth orbifolds $\orbify{P}$ and $\orbify{O}$ is a complete orbifold \emph{embedding} if the map $f:X_\orbify{P}\to X_\orbify{O}$ is a topological embedding of the underlying spaces, each of the homomorphisms $\Theta_{f,y}:\Gamma_y\to\Gamma_{f(y)}$ is injective, and on each chart, the $\Theta_{f,y}$-equivariant local lifts $\tilde f_y:\tilde U_y\to\tilde V_{f(y)}$ are smooth embeddings.
\end{definition}

One should observe that the condition that the equivariant local lifts $\tilde f_x$ are embeddings automatically implies that the corresponding homomorphisms $\Theta_{f,x}$ are injective. For, if there exists $\gamma\in\Gamma_x$ with $\Theta_{f,x}(\gamma)=\{e\}$, then equivariance of $\tilde f_x$ yields 
$\tilde f_x(\gamma\cdot\tilde y)=\Theta_{f,x}(\gamma)\cdot\tilde f_x(\tilde y)=\tilde f_x(\tilde y)$ for all $\tilde y\in\tilde U_x$. Since $\tilde f_x$ is an embedding this implies that $\gamma=\{e\}$, and thus $\Theta_{f,x}$ is injective. Thus, the condition that $\Theta_{f,y}$ be injective is redundant for embeddings. As a consequence, there is a sensible definition of orbifold embedding in the category of orbifold maps as well:

\begin{definition}\label{OrbiEmbedding} An orbifold map $f=(f,\{\tilde f_x\})$ between smooth orbifolds $\orbify{P}$ and $\orbify{O}$ is an orbifold \emph{embedding} if the map $f:X_\orbify{P}\to X_\orbify{O}$ is a topological embedding of the underlying spaces, and on each chart, the $\Theta_{f,y}$-equivariant local lifts $\tilde f_y:\tilde U_y\to\tilde V_{f(y)}$ are smooth embeddings.
\end{definition}

The following example from \cite{MR3117354}*{section 2} is illustrative.

\begin{example}\label{TwoCompleteEmbRepresentSameOrbEmb} Let $\orbify{Q}=\mathbb{R}/\mathbb{Z}_2$ be as in example~\ref{FullSuborbifoldExample}. Consider the inclusion (embedding)
  $f:\orbify{Q}\to\orbify{Q}\times\orbify{Q}\times\orbify{Q}$,
  $y\mapsto (y,0,0)$, where $\tilde f_x(\tilde y)=(\tilde
  y,0,0)$. Note that $\tilde f_0$ is equivariant with respect to both
  $\Theta_{f,0}(\gamma)=(\gamma,e,e)$ and
  $\Theta'_{f,0}(\gamma)=(\gamma,\gamma,\gamma)$. Thus, we have two distinct complete orbifold embeddings
  $\starfunc f=(f,\{\tilde f_x\},\{\Theta_{f,x}\})$ and $\starfunc f'=(f,\{\tilde f_x\},\{\Theta'_{f,x}\})$ which represent the same orbifold embedding $f=(f,\{\tilde f_x\})$. In each case, observe that both $\Theta_{f,x}$ and $\Theta'_{f,x}$ are injective confirming the remarks which followed definition~\ref{CompleteOrbiEmbedding}.\end{example}
  
For open embeddings, that is, in the case where $\dim(\orbify{P})=\dim(\orbify{O})$, it is useful to note that the phenomenon in example~\ref{TwoCompleteEmbRepresentSameOrbEmb} cannot occur \cite{MR3117354}*{section 4}. To see this, note that if two complete orbifold embeddings $\starfunc f=(f,\{\tilde f_x\},\{\Theta_{f,x}\})$ and $\starfunc f'=(f,\{\tilde f_x\},\{\Theta'_{f,x}\})$ represent the same orbifold embedding $f=(f,\{\tilde f_x\})$, then equivariance of $\tilde f_x$ implies that $\tilde f_x(\gamma\cdot\tilde y)=\Theta_{f,x}(\gamma)\cdot\tilde f_x(\tilde y)=\Theta'_{f,x}(\gamma)\cdot\tilde f_x(\tilde y)$ for all $\tilde y\in\tilde U_x$ and $\gamma\in\Gamma_x$. Thus, $[\Theta'_{f,x}(\gamma)^{-1}\Theta_{f,x}(\gamma)]\cdot\tilde f_x(\tilde y)=\tilde f_x(\tilde y)$. Openness of the embedding implies that there exists $\tilde y$ such that $\tilde f_x(\tilde y)$ is not a singular point of $\tilde V_{f(x)}$. This implies that 
$\Theta'_{f,x}(\gamma)^{-1}\Theta_{f,x}(\gamma)=e$ since $\Gamma_{f(x)}$ acts effectively, whence $\Theta_{f,x}=\Theta'_{f,x}$ and $\starfunc f=\starfunc f'$.

\section{Proof of theorem~\ref{MainTheorem} and Corollary~\ref{GeodesicsAreEmbedded}}

\begin{proof}[Proof of part (\ref{MainTheoremPart1})] 
For each $x\in X_{\orbify{P}}$, let $(\tilde U_x,\Gamma_{x,\orbify{O}},\rho_x,\phi_x)$ be an orbifold chart for $\orbify{O}$ about $x$.  Let $\tilde V_x\subset\tilde U_x$, $\Lambda_{x,\orbify{O}}$, $\Omega_{x,\orbify{O}}$, $\psi_x$, and $\Gamma_{x,\orbify{P}}$ be as in the definition of suborbifold. Denote by $\tilde i_x:\tilde V_x\longhookrightarrow\tilde U_x$ the inclusion map.

Let $q_x:\tilde U_x/\Lambda_{x,\orbify{O}}\to\tilde U_x/\Gamma_{x,\orbify{O}}$ be the natural quotient map and define $\phi_x'=\phi_x\circ\ q_x\circ \ {\tilde i_x}/\Lambda_{x,\orbify{O}}:\tilde V_x/\Lambda_{x,\orbify{O}}\to U_x$: 

\begin{equation*}
      \xymatrix{{\tilde V_x}\ar[rr]^{\tilde i_x}\ar[d]&&{\tilde U_x}\ar[d]\\
        {\tilde V_x}/\Lambda_{x,\orbify{O}}\ar[rr]^>>>>>>>>>>%
        {{\tilde i_x}/\Lambda_{x,\orbify{O}}}\ar[ddrr]_
        {\begin{aligned}
        \scriptstyle {\phi_x'= }& \\[-1ex] \scriptstyle{\phi_x\circ\ q_x\circ } & \scriptstyle{\ {\tilde i_x}/\Lambda_{x,\orbify{O}}}\end{aligned}}
        &&{\tilde U_x}/\Lambda_{x,\orbify{O}}\ar[d]^{q_x}\\
        &&{\tilde U_x}/\Gamma_{x,\orbify{O}}\ar[d]^{\phi_x}\\
        && U_x 
      }
    \end{equation*}
For $\tilde y\in\tilde V_x$, let 
$\Lambda_{x,\orbify{O}}(\tilde y)=\{\gamma\in\Gamma_{x,\orbify{O}}:\gamma\cdot\tilde y\in\tilde V_x\}.$
Then, 
$\Lambda_{x,\orbify{O}}\subset\bigcap_{\tilde y\in\tilde V_x}\Lambda_{x,\orbify{O}}(\tilde y).$
By definition, $\orbify{P}$ is saturated if and only if 
$\Lambda_{x,\orbify{O}}\cdot\tilde y=\Lambda_{x,\orbify{O}}(\tilde y)\cdot\tilde y$ for all $\tilde y\in\tilde V_x$.  We claim that $\phi'_x$ is a homeomorphism onto its image if and only if $\orbify{P}$ is saturated. To see this, note that  $\orbify{P}$ is not saturated if and only if there exists $x\in X_{\orbify{P}}$ and $\delta\in\Gamma_{x,\orbify{O}}$ so that for some $\tilde y\in\tilde V_x$, $\delta\cdot\tilde y\in\tilde V_x$, but $\delta\cdot\tilde y\ne\lambda\cdot\tilde y$ for any $\lambda\in\Lambda_{x,\orbify{O}}$. Thus, $\tilde z=\delta\cdot\tilde y$ satisfies $\Lambda_{x,\orbify{O}}\cdot\tilde z\ne\Lambda_{x,\orbify{O}}\cdot\tilde y$. Since 
$\phi'_x(\Lambda_{x,\orbify{O}}\cdot\tilde z)=\phi'_x(\Lambda_{x,\orbify{O}}\cdot\tilde y)$, because $\tilde y$ and $\tilde z$ are in the same orbit under the full group $\Gamma_{x,\orbify{O}}$, we see that $\phi'_x$ is not a homeomorphism. Thus, we have shown that if $\phi'_x$ is a homeomorphism, then $\orbify{P}$ is saturated.
To show that $\orbify{P}$ is saturated implies $\phi'_x$ is a homeomorphism, note that $\phi'_x$ is clearly continuous, and since $\Gamma_{x,\orbify{O}}$ is finite, $\phi_x'$ is open. As shown above, $\orbify{P}$ saturated implies that $\phi'_x$ is injective.

Now, since $\Omega_{x,\orbify{O}}$ fixes $\tilde V_x$ by definition, there is a natural identification $I:\tilde V_x/\Lambda_{x,\orbify{O}}\leftrightarrow\tilde V_x/(\Lambda_{x,\orbify{O}}/\Omega_{x,\orbify{O}})=\tilde V_x/\Gamma_{x,\orbify{P}}$ and we have the diagram:

\begin{equation*}
      \xymatrix{{\tilde V_x}\ar[rr]^{\tilde i_x}\ar[d]&&{\tilde U_x}\ar[d]\\
        {\tilde V_x}/\Lambda_{x,\orbify{O}}\ar[rr]^>>>>>>>>>>%
        {{\tilde i_x}/\Lambda_{x,\orbify{O}}}\ar[ddrr]_{\phi_x'}\ar@{=}[d]_I
        &&{\tilde U_x}/\Lambda_{x,\orbify{O}}\ar[d]^{q_x}\\
        \tilde V_x/\Gamma_{x,\orbify{P}}\ar[d]_{\psi_x} &&{\tilde U_x}/\Gamma_{x,\orbify{O}}\ar[d]^{\phi_x}\\
        V_x\ar[rr]^{\iota}&& U_x 
      }
    \end{equation*}
    
Let $\orbify{P}'$ be the orbifold defined by the local charts $(\tilde V_x,\Gamma_{x,\orbify{P}},\rho'_x,\psi_x)$, where
$\rho'_x$ is the induced action of $\Gamma_{x,\orbify{P}}$ on $\tilde V_x$ by restricting $\rho_x$ to $\Lambda_{x,\orbify{O}}$ and the action to $\tilde V_x$. The required topological embedding $\iota:X_{\orbify{P}'}\to X_{\orbify{O}}$ with $\iota(X_{\orbify{P}'})=X_{\orbify{P}}$ is given in local charts by:
$\iota=\phi_x'\circ I^{-1}\circ\psi_x^{-1}:V_x\to U_x$ and is covered by the inclusion maps $\tilde i_x$. This completes the proof of part (\ref{MainTheoremPart1}).    
\end{proof}

\begin{proof}[Proof of part (\ref{MainTheoremPart2})]
Let $\sigma_{x,\orbify{P}}$ be a splitting of the exact sequence

\begin{equation*}
\xymatrix{1\ar[r] & \Omega_{x,\orbify{O}}\ar[r] & \Lambda_{x,\orbify{O}}\overset{\iota_{x,\orbify{O}}}{\longhookrightarrow}\Gamma_{x,\orbify{O}} \ar[r]^-{q}\ar@{}[]!<-8ex,0ex>="a" & \Lambda_{x,\orbify{O}}/\Omega_{x,\orbify{O}}=\Gamma_{x,\orbify{P}}\ar@{}[]!<12ex,0ex>="b" \ar[r]\ar@/^1.5pc/"b";"a"^{\sigma_{x,\orbify{P}}}& 1.}
\end{equation*}

Let $\Theta_{\iota,x}=\iota_{x,\orbify{O}}\,\circ\,\sigma_{x,\orbify{P}}:\Gamma_{x,\orbify{P}}\to\Gamma_{x,\orbify{O}}$ where  $\iota_{x,\orbify{O}}=\Lambda_{x,\orbify{O}}\longhookrightarrow\Gamma_{x,\orbify{O}}$ is the inclusion map. $\Theta_{\iota,x}$ is clearly an injective homomorphism and note that the existence of $\Theta_{\iota,x}$ is equivalent to the existence of $\sigma_{x,\orbify{P}}$. Let $\tilde i_x:\tilde V_x\longhookrightarrow\tilde U_x$ be the inclusion map. It is easy to see that $\tilde i_x$ is $\Theta_{\iota,x}$-equivariant:  Let $\tilde y\in\tilde V_x$ and let $\gamma\in\Gamma_{x,\orbify{P}}$. Then, $\gamma\cdot\tilde y=\tilde i_x(\gamma\cdot\tilde y)$. On the other hand, $\Theta_{\iota,x}(\gamma)= \gamma\omega$ for some $\omega\in\Omega_{x,\orbify{O}}$ since $q\circ\Theta_{\iota,x}=\text{Id}$. Thus, 
$\Theta_{\iota,x}(\gamma)\cdot\tilde y=\gamma\omega\cdot\tilde y=\gamma\cdot\tilde y$, since $\omega\cdot\tilde y=\tilde y$. Thus, $\starfunc \iota=(\iota,\{\tilde i_x\},\{\Theta_{\iota,x}\}):\orbify{P}'\to \orbify{P}\subset\orbify{O}$ is a complete orbifold embedding that covers the topological embedding $\iota:X_{\orbify{P'}}\to X_{\orbify{P}}\subset X_{\orbify{O}}$. This completes the proof of part (\ref{MainTheoremPart2}).
\end{proof}

\begin{proof}[Proof of Corollary~\ref{GeodesicsAreEmbedded}] $X$ is the point set of a length-minimizing curve in $\orbify{O}$, so there exists a unit-speed parametrization $c:[a,b]\to\orbify{O}$ such that $c([a,b])=X$. We show that $X$ is the underlying space of a suborbifold $\orbify{P}\subset\orbify{O}$ that is saturated and split and thus is the image of an orbifold embedding by theorem~\ref{MainTheorem}. 
Fix $t\in [a,b]$, and let $x=c(t)$. It follows from the characterization of length minimizing geodesic segments in \cite{MR2687544}*{theorem~3, pg. 32} or \cite{MR1218706}*{proposition~15} that $c$ is contained in the closure of single connected open stratum of $\orbify{O}$. That is, $c|_{(a,b)}$ lies in a subspace of $\orbify{O}$ with constant isotropy. This implies that $X$ has the structure of a suborbifold $\orbify{P}\subset\orbify{O}$. These results also imply that $\Gamma_{x,\orbify{P}}=\{e\}$, and thus $\orbify{P}$ has a trivial orbifold structure and so $\orbify{P}$ is split. Let $\tilde c_x$ be a lift of $c$ to $\tilde U_x$. Since $\Gamma_{x,\orbify{P}}=\{e\}$, we have $\Lambda_{x,\orbify{O}}\cdot\tilde y=\tilde y$ for all $\tilde y\in\tilde c_x$. If $\orbify{P}$ were not saturated at $x$, then there exists $s<s'\in [a,b]$ and $\gamma\in\Gamma_{x,\orbify{O}}$ with $\gamma\cdot \tilde c_x(s)=\tilde c_x(s')$. This implies that $c$ contains a loop in $\orbify{O}$. This contradicts the property that a length-minimizing curve $c$ must minimize length between any of its points. This contradiction implies $\orbify{P}$ is saturated and thus, by theorem~\ref{MainTheorem},  $\orbify{P}$ is the image of a complete orbifold embedding. This completes the proof of corollary~\ref{GeodesicsAreEmbedded}.
\end{proof}

\section{Table of Examples and Suborbifold Properties}

Here is a summary table of properties of suborbifolds possessed by the examples presented in this article which show that all possible combinations of properties that are not implied by others can occur:
\begin{table}[h!]
\centering
\begin{tabular}{r|c|c|c|c|c|}
Example & Suborbifold & Full & Saturated & Split & Image of Orbifold Embedding\\ \hline
\ref{FullSuborbifoldExample} & Yes & Yes & Yes & Yes & Yes\\ \cline{2-6}
\ref{FullSuborbifoldExample2} & Yes & Yes & Yes & Yes & Yes\\ \cline{2-6}
\ref{DiagonalSuborbifoldExample} & Yes & No & Yes & Yes & Yes\\ \cline{2-6}
\ref{SuborbifoldExample} & Yes & No & Yes & Yes & Yes\\ \cline{2-6}
\ref{NonEmbeddableSuborbifold} & Yes & Yes & Yes & No & No\\ \cline{2-6}
\ref{SplitOnlySubOrbifolds} & Yes & No & No & Yes & No\\ \cline{2-6}
\ref{NonFullNonSplitSaturatedSuborbifold} & Yes & No & Yes & No & No\\ \cline{2-6}
\ref{NonSuborbifold} & No & -- & -- & -- & No \\ \cline{2-6}
\end{tabular}
\label{OrbifoldTable}
\end{table}

\FloatBarrier

\section{Acknowledgment} We are grateful to the referee and wish to thank him for his useful comments and valuable corrections that lead to improvement of the final manuscript.

\begin{bibdiv}
  \begin{biblist}

    \bibselect{ref}
 
  \end{biblist}
\end{bibdiv}

\end{document}